\newtheorem{theorem}{Theorem}[section]
\newtheorem{lemma}[theorem]{Lemma}
\newtheorem{corollary}[theorem]{Corollary}
\newtheorem{remark}[theorem]{Remark}
\newtheorem{defn}[theorem]{Definition}
\begin{document}
\setcounter{page}{1}
\title{A class of rotationally symmetric quantum layers of dimension 4}
\author{Jing Mao}
\date{}
\protect\footnotetext{\!\!\!\!\!\!\!\!\!\!\!\!{ MSC 2010: 34L05;
81Q10}
\\
{ ~~Key Words: Discrete spectrum; Essential spectrum; Bound state; Quantum layer} \\
  Supported by Funda\c{c}\~{a}o
para a Ci\^{e}ncia e Tecnologia (FCT) through a doctoral fellowship
SFRH/BD/60313/2009.}
\maketitle ~~~\\[-15mm]
\begin{center}{\footnotesize  Departamento de Matem\'{a}tica,
Instituto Superior T\'{e}cnico, Technical University of Lisbon,
Edif\'{\i}cio Ci\^{e}ncia, Piso 3, Av.\ Rovisco Pais, 1049-001
Lisboa, Portugal; jiner120@163.com, jiner120@tom.com}
\end{center}

\begin{abstract}
Under several geometric conditions imposed below, the existence of
the discrete spectrum below the essential spectrum is shown for the
Dirichlet Laplacian on the quantum layer built over a spherically
symmetric hypersurface with a pole embedded in the Euclidean space
$R^{4}$. At the end of this paper, we also show the advantage and
independence of our main result comparing with those existent
results for higher dimensional quantum layers or quantum tubes.
\end{abstract}

\markright{\sl\hfill  J. Mao \hfill}

\section{Introduction}
\renewcommand{\thesection}{\arabic{section}}
\renewcommand{\theequation}{\thesection.\arabic{equation}}
\setcounter{equation}{0} \setcounter{maintheorem}{0}

The study of the spectral properties of the Dirichlet Laplacian in
infinitely stretched regions has attracted so much attention, since
it has applications in elasticity, acoustics, electromagnetism, etc.
It also has application in the quantum physics. Since Duclos et al.
considered the existence of the discrete spectrum of the Dirichlet
Laplacian of the quantum layers built over surfaces in \cite{ppd},
many similar results have been obtained for the quantum layers whose
reference manifolds are surfaces. However, very little was known
about the existence of the discrete spectrum of the Dirichlet
Laplacian on the quantum layers of dimension greater than 3.

In \cite{ll,ll2}, under some geometric assumptions therein, Lin and
Lu have successfully proved the the existence of the discrete
spectrum of the Dirichlet Laplacian on the quantum layers built over
 submanifolds of the Euclidean space
$R^{m}$ ($3\leq{m}<\infty$). However, the parabolicity of the
reference submanifold and nonpositivity of the integration of
$\mathcal{K}_{m-2}$ defined by (\ref{4.1}) are necessary. Is the
parabolicity of the reference submanifold necessary for the
existence of the discrete spectrum? We try to give a negative answer
here. In general, it is not easy to judge whether a prescribed
manifold is parabolic or not. However, Grigor'yan has shown a
sufficient and necessary condition, which is related to the area of
the boundary of the geodesic ball and could be easily computed, of
parabolicity for spherically symmetric manifolds in \cite{grig1}.
Hence, we guess maybe we can expect to get the existence of the the
discrete spectrum of the quantum layer built over some  spherically
symmetric submanifold, which is non-parabolic, of the Euclidean
space $R^{m}$ ($3\leq{m}<\infty$).

In order to state our main result, we define two quantities
$\sigma_{0}$ and $\sigma_{ess}$ as follows.
\begin{defn} Let $M$ be a manifold whose Laplacian $\Delta$
can be extended to a self-adjoint operator. Let
\begin{eqnarray} \label{1.1}
\sigma_{0}=\inf_{f\in{C}^{\infty}_{0}(M)}\frac{-\int_{M}f\Delta{f}dV_{M}}{\int_{M}f^{2}dV_{M}},
\end{eqnarray}
\begin{eqnarray}  \label{1.2}
\sigma_{ess}=\sup_{K}\inf_{f\in{C}^{\infty}_{0}(M\setminus{K})}\frac{-\int_{M}f\Delta{f}dV_{M}}{\int_{M}f^{2}dV_{M}},
\end{eqnarray}
where $K$ is running over all compact subsets of $M$, and $dV_{M}$
denotes the volume element of $M$.
\end{defn}

In fact, $\sigma_{0}$ and $\sigma_{ess}$ are the lower bound of the
spectrum and the lower bound of the essential spectrum of the
Laplacian $\triangle$ on $M$, respectively. In general case,
$\sigma_{0}\leq\sigma_{ess}$. If $\sigma_{0}<\sigma_{ess}$, then the
existence of the discrete spectrum is obvious. In mathematical
physics, points in the discrete spectrum are called bound states,
moreover, the lowest bound state is called the ground state.

We want to show that $\sigma_{0}<\sigma_{ess}$ holds for the the
Laplacian $\triangle$ of the class of quantum layers considered in
the sequel. In fact, by using this strategy we can prove the
following.

\begin{theorem} (Main theorem)\label{theorem1}
Assume $\Sigma$ is a spherically symmetric hypersurface with a pole
embedded in $R^{4}$, and $\Sigma$ is not a hyperplane, if in
addition $\mathcal{K}_{2}$ is integrable on $\Sigma$ and
\begin{eqnarray*}
\int_{\Sigma}\mathcal{K}_{2}d\Sigma\leq0
\end{eqnarray*}
 with $\mathcal{K}_{2}$ defined by
(\ref{4.1}), then under assumptions A1, A2 and A3 given in Section
2, the ground state of the quantum layer $\Omega$ built over
$\Sigma$ exists.
\end{theorem}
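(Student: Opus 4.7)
The plan is to establish the strict inequality $\sigma_{0}<\sigma_{ess}$ for the Dirichlet Laplacian on $\Omega$, since once this holds the variational characterisations (\ref{1.1}) and (\ref{1.2}) immediately produce at least one bound state, and hence a ground state, below the essential spectrum. The overall strategy is the classical Duclos--Exner one, adapted to the present non-parabolic four-dimensional setting.

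First I would introduce Fermi (tubular) coordinates $(s,t)\in\Sigma\times(-a,a)$ on $\Omega$, where $t$ is the signed normal distance and $2a$ the layer thickness. Using the shape operator $A$ of $\Sigma$, one writes the induced metric on $\Omega$ in block form, the volume element as $dV_{\Omega}=\det(I-tA)\,d\Sigma\,dt$, and the Dirichlet Laplacian as the sum of the transverse operator $-\partial_{t}^{2}$ and a weighted tangential part. The spherical symmetry and existence of a pole let me diagonalise $A$ at every point of $\Sigma$, so its elementary symmetric functions become explicit functions of one radial variable, putting $\mathcal{K}_{2}$ on stage.

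Second, via a Persson-type characterisation I would identify $\sigma_{ess}=\pi^{2}/(2a)^{2}$, the bottom of the Dirichlet spectrum of the flat slab of width $2a$. The integrability of $\mathcal{K}_{2}$, together with the spherical symmetry, forces the principal curvatures of $\Sigma$ to tend to zero at infinity, so that far from the pole the tubular neighbourhood differs from a flat slab by a vanishing perturbation; a Neumann bracketing then supplies the matching two-sided bounds.

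Third, I would construct a trial function $f(s,t)=\chi(s)J_{1}(t)+\varepsilon\,\eta(s,t)$, where $J_{1}(t)=\cos(\pi t/(2a))$ is the first transverse Dirichlet mode, $\chi$ is a smooth radial cut-off on $\Sigma$, and $\eta(s,\cdot)$ is chosen orthogonal to $J_{1}$ in $L^{2}(-a,a)$ for every $s$. Plugging $f$ into the Rayleigh quotient, the leading contribution equals $\sigma_{ess}$; the cross term in $\varepsilon$ vanishes by orthogonality, and the order $\varepsilon^{2}$ correction produces, after expanding $\det(I-tA)$, a curvature contribution proportional to $\int_{\Sigma}\mathcal{K}_{2}\,d\Sigma\le0$, plus a positive ``kinetic'' term from $|\nabla_{\Sigma}\chi|^{2}$.

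The main obstacle is that $\Sigma$ is non-parabolic, so $\int_{\Sigma}|\nabla_{\Sigma}\chi|^{2}d\Sigma$ cannot be driven to zero along cut-offs $\chi\to1$, in contrast with the parabolic case of \cite{ll,ll2}. To overcome this I would exploit the radial structure: assumptions A1, A2 and A3 should reduce the quadratic form to a one-dimensional variational problem in the radial coordinate, in which the non-positivity of $\int_{\Sigma}\mathcal{K}_{2}\,d\Sigma$ together with the hypothesis that $\Sigma$ is not a hyperplane force the curvature contribution to strictly dominate the cut-off energy for a suitably chosen pair $(\chi,\varepsilon)$. Verifying this strict domination, rather than merely the sign of the curvature term, is the technical heart of the argument.
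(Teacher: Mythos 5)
Your overall frame (prove $\sigma_{0}<\sigma_{ess}=(\pi/2a)^{2}$ via Fermi coordinates, a Persson-type lower bound on $\sigma_{ess}$ under A2--A3, and a trial function built on the first transverse mode) matches the paper, which splits the claim into Theorem \ref{theorem2} and Theorem \ref{theorem3}. But the proposal has a genuine gap exactly where you yourself flag it. On a non-parabolic $\Sigma$ the weighted radial kinetic energy of a cut-off cannot be sent to zero along arbitrary $\chi\to1$, and you offer no mechanism for beating it --- you only assert that the radial reduction ``should'' make the curvature term dominate, and defer the verification. The paper's entire point is the explicit construction that closes this: the cut-off $\varphi_{\sigma}(s)=\min\{1,K_{0}(\sigma s)/K_{0}(\sigma s_{0})\}$ built from the Macdonald function, combined with the growth estimate $r^{2}(s)\le s^{2}+c_{3}s$ from Lemma \ref{lemma2}, and the explicit asymptotics showing $\int_{0}^{\infty}(\varphi'_{\sigma})^{2}(s^{2}+c_{3}s)\,ds\to0$ as $\sigma\to0+$, so that the tangential term $-\int\Psi\Delta_{1}\Psi$ vanishes in the limit while the transverse/curvature term converges to $\int_{\Sigma}\mathcal{K}_{2}\,d\Sigma$ by dominated convergence. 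Without naming a concrete family of cut-offs and proving this decay, the ``technical heart'' you acknowledge is simply missing, and the proof does not close.

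There is also a structural error in where the curvature enters your expansion. In the paper the term $\int_{\Sigma}\mathcal{K}_{2}\,d\Sigma$ appears at \emph{leading} order, i.e.\ already in $Q_{3}[\varphi_{\sigma}\chi_{1}]$, via Lemma \ref{lemma1}: the moments $\eta_{2k}=\int_{-a}^{a}u^{2k}(\chi_{1,u}^{2}-k_{1}^{2}\chi_{1}^{2})\,du$ paired with the expansion of $\det(1-uA)=(1-uk_{s})(1-uk_{\theta})^{2}$ produce exactly $\mathcal{K}_{2}=\sum_{k}\eta_{2k}c_{2k}(A)$. You instead place it at order $\varepsilon^{2}$ and claim the cross term vanishes by transverse orthogonality. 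That orthogonality is broken by the curvature-weighted volume element, and this is essential: in the critical case $\int_{\Sigma}\mathcal{K}_{2}\,d\Sigma=0$ the paper's decisive negative contribution is precisely the \emph{non-vanishing} cross term $-2\epsilon\int_{\Omega_{0}}j(k_{s}+2k_{\theta})\,d\Omega$, linear in $\epsilon$, whose sign can be chosen because Lemma \ref{lemma2}(3) guarantees a region where $k_{s}+2k_{\theta}$ has a fixed nonzero sign. If the cross term truly vanished as you assert, the $\varepsilon^{2}$ term $Q_{3}[ju\chi]$ has no definite sign and the case $\int_{\Sigma}\mathcal{K}_{2}\,d\Sigma=0$ (which your hypothesis $\le0$ must cover) would be out of reach.
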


The paper is organized as follows. The fundamental geometric
properties of the quantum layers built over spherically symmetric
hypersurfaces will be discussed in the next section. The fact that
the Laplacian $\triangle$ on the quantum layers can be extended to a
self-adjoint operator will be explained in Section 3. The main
theorem above will be proved in Section 4.

\section{Geometry of rotationally symmetric quantum layers}
\renewcommand{\thesection}{\arabic{section}}
\renewcommand{\theequation}{\thesection.\arabic{equation}}
\setcounter{equation}{0} \setcounter{maintheorem}{0}

Let $m$ ($3\leq{m}<\infty$) be an integer and let $\Sigma$ be a
$C^{2}$-smooth hypersurface with a pole embedded in the Euclidean
space $R^{m}$. The existence of a pole on $\Sigma$ is a nontrivial
assumption under which $\Sigma$ is necessarily diffeomorphic to
$R^{m-1}$ leading to the simple connectedness and non-compactness of
$\Sigma$. Under this assumption, we can also set up the global
geodesic polar coordinates to parametrize the hypersurface $\Sigma$
by a unique patch $p:\Sigma_{0}\rightarrow{R^{m}}$, where
$\Sigma_{0}:=(0,\infty)\times{\mathbb{S}^{m-2}}$ with
$\mathbb{S}^{m-2}$ the unit sphere in $R^{m-1}$. Naturally, $\Sigma$
can be identified with the image of $\Sigma_{0}$. The tangent
vectors $p_{,\mu}:=\frac{\partial{p}}{\partial{q}^{\mu}}$ are
linearly independent and span the tangent space at every point of
$\Sigma$, correspondingly, the unit normal vector field $\vec{n}$
can be determined. Let $\Omega_{0}:=\Sigma_{0}\times(-a,a)$, then
the quantum layer $\Omega:=\Phi(\Omega_{0})$ of width $2a$ built
over $\Sigma$ can be defined by a natural mapping
$\Phi:\Omega_{0}\rightarrow{R^{m}}$ as follows
\begin{eqnarray} \label{2.1}
\Phi(q,u):=p(q)+u\vec{n}(q), \qquad (q,u)\in\Sigma_{0}\times(-a,a).
\end{eqnarray}

We make an agreement on the indices range,
$1\leq{\mu,\nu,\ldots,}\leq{m-1}$ and $1\leq{i,j,\ldots,}\leq{m}$.
Denote the pole on $\Sigma$ by $o$, we know that the exponential map
$\exp_{o}:\mathcal{D}_{o}\rightarrow{\Sigma}$ is a diffeomorphism,
where
$\mathcal{D}_{o}=\{s\xi|~0\leq{s}<\infty,~\xi\in{S^{m-2}_{o}}\}$
with $S^{m-2}_{o}$ the unit sphere in the tangent space
$T_{o}(\Sigma)$. For a fixed vector $\xi\in{T_{o}M}$, $|\xi|=1$, let
$\xi^{\bot}$ be the orthogonal complement of $\{\mathbb{R}\xi\}$ in
$T_{o}M$ and let $\tau_{s}:T_{o}M\rightarrow{T_{exp_{o}(s\xi)}M}$ be
the parallel translation along the geodesic
$\gamma_{\xi}(s):=exp_{o}(s\xi)$ with $\gamma'(0)=\xi$. Define
 the path of linear transformations $\mathbb{A}(s,\xi):\xi^{\bot}\rightarrow{\xi^{\bot}}$ by
 \begin{eqnarray*}
\mathbb{A}(s,\xi)\eta=(\tau_{s})^{-1}Y(s),
 \end{eqnarray*}
where $Y(s)$ is the Jacobi field along $\gamma_{\xi}$ satisfying
$Y(0)=0$, $(\triangledown_{s}Y)(0)=\eta$. Moreover, for
$\eta\in{\xi^{\bot}}$, set
\begin{eqnarray*}
\mathcal{R}(s)\eta=(\tau_{s})^{-1}\mathbb{R}(s)(\tau_{s}\eta)=(\tau_{s})^{-1}R(\gamma'_{\xi}(s),\tau_{s}\eta)
\gamma'_{\xi}(s),
\end{eqnarray*}
then $\mathcal{R}(s)$ is a self-adjoint map of $\xi^{\bot}$, where
the curvature tensor is given by $R(X,Y)Z=-[\nabla_{X},
\nabla_{Y}]Z+ \nabla_{[X,Y]}Z$. Obviously, the map
$\mathbb{A}(s,\xi)$ satisfies the Jacobi equation
 $\mathbb{A}''+\mathcal{R}\mathbb{A}=0$ with initial conditions
 $\mathbb{A}(0,\xi)=0$,$\mathbb{A}'(0,\xi)=I$, and by applying Gauss's lemma the Riemannian metric of $M$ can be
 expressed by
 \begin{eqnarray*}
 dt^{2}(exp_{o}(s\xi))=ds^{2}+|\mathbb{A}(s,\xi)d\xi|^{2}
 \end{eqnarray*}
 on the set
 $exp_{o}(\mathcal{D}_0)$. Hence, the induced
metric $g_{\mu\nu}$ in the geodesic polar coordinates satisfies
\begin{eqnarray*}
\sqrt{\det[g_{\mu\nu}]}=\det\mathbb{A}(s,\xi).
\end{eqnarray*}
 Define a function $J>0$ on
 $\mathcal{D}_{0}$ by
 \begin{eqnarray*}
 J^{m-2}=\sqrt{\det[g_{\mu\nu}]},
 \end{eqnarray*}
that is $dV_{\Sigma}=J^{m-2}dsd\xi$. We know that the function
$J(s,\xi)$ satisfies (cf. \cite{p}, p. 244)
\begin{eqnarray*}
&& J''+\frac{1}{(m-2)}Ricci\left(\frac{d}{d s},
\frac{d}{d s}\right)J\leq 0  \\
&&J(s,\xi)=s + O(s^2) \\
&& J'(s,\xi)=1+O(s),
\end{eqnarray*}
where $Ricci$ denotes the Ricci curvature tensor on $\Sigma$ and
$\frac{d}{d s}$ is the radial unit tangent vector along the geodesic
$\gamma_{\xi}(s)$. So, we have
\begin{eqnarray} \label{2.2}
J''+\frac{1}{(m-2)}Ricci\left(\frac{d}{d s}, \frac{d}{d
s}\right)J\leq 0 \qquad {\rm{with}} \quad J(0,\xi)=0, \quad
J'(0,\xi)=1.
\end{eqnarray}

 Consider now layers which are invariant with
respect to rotations around a fixed axis in $R^{m}$. We could thus
suppose that $\Sigma$ is a rotational hypersurface parametrized by
$p:\Sigma_{0}\rightarrow{R^{m}}$,
\begin{eqnarray} \label{2.3}
p(s,\theta_{1},\ldots,\theta_{m-2}):=(r(s)\cos(\theta_{1}),r(s)\sin(\theta_{1})\cos(\theta_{2}),r(s)\sin(\theta_{1})\sin(\theta_{2})
\cos(\theta_{3}),\ldots,\nonumber\\
r(s)\sin(\theta_{1})\ldots{\sin}(\theta_{m-3})\cos(\theta_{m-2}),r(s)\sin(\theta_{1})\ldots{\sin}(\theta_{m-3})\sin(\theta_{m-2}),z(s)),
\end{eqnarray}
where $r,z\in{C^{2}}((0,\infty))$, $r>0$ and
$(\theta_{1},\ldots,\theta_{m-2})\in{\mathbb{S}^{m-2}}$. This
parametrization will be the geodesic polar coordinate chart if we
additionally require
\begin{eqnarray} \label{2.4}
\left(r'(s)\right)^{2}+\left(z'(s)\right)^{2}=1,
\end{eqnarray}
since by direct calculation the induced metric tensor on $\Sigma$
can be written as $ds^{2}+r^{2}|d\xi|^{2}$ with
\begin{eqnarray*}
|d\xi|^{2}:=d\theta_{1}^{2}+(\sin\theta_{1})^{2}d\theta_{2}^{2}+(\sin\theta_{1})^{2}(\sin\theta_{2})^{2}d\theta_{3}^{2}+\cdots+
(\sin\theta_{1})^{2}(\sin\theta_{2})^{2}\cdots(\sin\theta_{m-3})^{2}d\theta_{m-2}^{2},
\end{eqnarray*}
the round metric on $\mathbb{S}^{m-2}$, provided the requirement
(\ref{2.4}) is satisfied. So, we have $dV_{\Sigma}=r^{m-2}dsd\xi$,
which implies the function $J$ defined above satisfies $J=r$ in this
case. Moreover, under the parametrization (\ref{2.3}) with the
requirement (\ref{2.4}), $\Sigma$ is a spherically symmetric
hypersurface with a pole, and its Weingarten tensor is given by
$(h_{\mu\nu})={\rm{diag}}(k_{s},k_{\theta_{1}}\ldots,k_{\theta_{m-2}})$
with the principle curvatures
 \begin{eqnarray} \label{2.5}
 k_{s}=r'z''-r''z' \quad{\rm{and}}\quad
k_{\theta}:=k_{\theta_{1}}=\cdots=k_{\theta_{m-2}}=\frac{z'}{r}.
\end{eqnarray}
As pointed out in \cite{ppd}, it is sufficient to know the function
$s\rightarrow{k_{s}(s)}$ only, since $r,z$ can be constructed from
the relations
\begin{eqnarray*}
r(s)=\int_{0}^{s}\cos b(\vartheta)d\vartheta, \qquad
z(s)=\int_{0}^{s}\sin b(\vartheta)d\vartheta,
\end{eqnarray*}
with $b(\vartheta):=\int_{0}^{s}k_{s}(\vartheta)d\vartheta$.

By (\ref{2.2}), (\ref{2.4}), (\ref{2.5}) and the facts $J=r$ and
$Ricci\left(\frac{d}{ds},\frac{d}{ds}\right)=(m-2)k_{s}k_{\theta}$,
we know that the function $r(s)$ satisfies
\begin{eqnarray} \label{2.6}
r''+k_{s}k_{\theta}r= 0 \qquad {\rm{with}} \quad r(0)=0, \quad
r'(0)=1.
\end{eqnarray}
This equation will make an important role in the proof of Theorem
\ref{theorem1}.

In the sequel, we impose the following assumptions on $\Sigma$.
 \vspace{2mm}
 $\\$ A1. $\Sigma$ is not self-intersecting, i.e., $\Phi$ is
injective.
 $\\$ A2. The half width  $a$ of the layer satisfies
$a<\rho_{m}:=\left(\max\{\|k_{s}\|_{\infty},\|k_{\theta}\|_{\infty}\}\right)^{-1}$,
where $\|\cdot\|_{\infty}$ denotes the $L^{\infty}$-norm.
 $\\$ A3. For $x\in\Sigma$, $\|A\|(x)\rightarrow0$ as
$d(x,x_{0})\rightarrow\infty$, where $x_{0}$ is a fixed point on the
spherically symmetric hypersurface $\Sigma$. This means that
$\Sigma$ is asymptotically flat.

\section{Self-adjoint extension of the Laplacian on the quantum layers}
\renewcommand{\thesection}{\arabic{section}}
\renewcommand{\theequation}{\thesection.\arabic{equation}}
\setcounter{equation}{0} \setcounter{maintheorem}{0}

As in \cite{ppd,ll}, from the definition (\ref{2.1}), the metric
tensor of the layer as a submanifold of $R^{m}$ satisfies
 \begin{eqnarray} \label{3.1}
G_{ij}=\left\{
\begin{array}{lll}
(\delta^{\sigma}_{i}-uh^{\sigma}_{i})(\delta^{\rho}_{\sigma}-uh^{\rho}_{\sigma})g_{\rho{j}},
\quad \quad 1\leq{i,j}\leq{m-1},\\
0,  \qquad\qquad\qquad\qquad\qquad\qquad  i~{\rm{or}}~j=m,\\
1,  \qquad\qquad\qquad \qquad\qquad \qquad i=j=m, &\quad
\end{array}
\right.
\end{eqnarray}
which implies the metric matrix has the block form
$$ (G_{ij})=\left(
 \begin{array}{cc}
   G_{\mu\nu} & 0  \\
   0 & 1 \\
  \end{array}
  \right)  {\qquad \rm{with}}\quad
  G_{\mu\nu}=\left(\delta^{\sigma}_{\mu}-uh^{\sigma}_{\mu}\right)\left(\delta^{\rho}_{\sigma}-uh^{\rho}_{\sigma}\right)g_{\rho{\nu}}, \quad
  1\leq{\mu,\nu}\leq{m-1}.$$
 Then by (\ref{3.1}) we obtain
\begin{eqnarray} \label{3.2}
\det(G_{AB})=\left[\det(1-uA)\right]^{2}\det(g_{\mu\nu}).
\end{eqnarray}
Since the eigenvalues of the matrix of the Weingarten map are the
principle curvatures $k_{s}$, $k_{\theta}$, we have
\begin{eqnarray} \label{3.3}
\det(1-uA)=(1-uk_{s})(1-uk_{\theta})^{m-2},
\end{eqnarray}
where $k_{s}$ and $k_{\theta}$ are given by (\ref{2.5}). By
assumption A2, the entries $G_{\mu\nu}$ of the matrix $G$ can be
estimated by
\begin{eqnarray} \label{3.4}
C_{-}g_{\mu\nu}\leq{G_{\mu\nu}}\leq{C_{+}}g_{\mu\nu},
\end{eqnarray}
where $C_{\pm}:=(1\pm{a\rho_{m}^{-1}})^{2}$ with
$0<C_{-}\leq1\leq{C_{+}}<4$. So, assumption A2 makes sure that the
mapping $\Phi$ is nonsingular, which implies the mapping $\Phi$
induces a Riemannian metric $G$ on $\Omega$. Hence, we know that the
mapping $\Phi$ is a diffeomorphism under assumptions A1 and A2.

There is an interesting truth we would like to point out here. From
the last section, we know that the Riemannian metric of the
spherically symmetric hypersurface $\Sigma$ can be expressed as
$ds^{2}+r^{2}|d\xi|^{2}$ with $|d\xi|^{2}$ the round metric on
$\mathbb{S}^{m-2}$ under the parametrization (\ref{2.3}), then by
(\ref{3.1}) the Riemannian metric of the quantum layer $\Omega$
built over $\Sigma$ can be written as
$du^{2}+ds^{2}+r^{2}|d\xi|^{2}$, which implies $\Omega$ is also
cylindrically symmetric.

For convenience, let $x_{1}:=s$,
$x_{2}:=\theta_{1},\ldots,x_{m-1}:=\theta_{m-2},x_{m}:=u$,  then in
the coordinate system $\{x_{1},\ldots,x_{m}\}$ on $\Omega$, the
Laplacian $\Delta=\Delta_{\Omega}$ can be written as
\begin{eqnarray*}
\Delta=\frac{1}{\sqrt{\det(G_{ij})}}\sum_{\mu,\nu=1}^{m-1}\frac{\partial}{\partial{x}_{\mu}}\left(G^{\mu\nu}\sqrt{\det(G_{ij})}\frac{\partial}
{\partial{x}_{\nu}}\right)+\frac{1}{\sqrt{\det(G_{ij})}}\frac{\partial}{\partial{u}}\left(G^{mm}\sqrt{\det(G_{ij})}\frac{\partial}{\partial{u}}\right).
\end{eqnarray*}
Using (\ref{3.3}) we could split $\triangle$ into a sum of two
parts, $\triangle=\triangle_{1}+\triangle_{2}$, given by
\begin{eqnarray*}
\Delta_{1}:=\frac{1}{\sqrt{\det(G_{ij})}}\frac{\partial}{\partial{u}}\left(G^{mm}\sqrt{\det(G_{ij})}\frac{\partial}{\partial{u}}\right)=\frac{\partial^{2}}{\partial{u^{2}}}-
\left(\frac{k_{s}}{1-uk_{s}}+\frac{(m-2)k_{\theta}}{1-uk_{\theta}}\right)\frac{\partial}{\partial{u}}
\end{eqnarray*}
and
\begin{eqnarray*}
\Delta_{2}:=\Delta-\Delta_{1}=\frac{1}{\sqrt{\det(G_{ij})}}\sum_{\mu,\nu=1}^{m-1}\frac{\partial}{\partial{x}_{\mu}}\left(G^{\mu\nu}\sqrt{\det(G_{ij})}\frac{\partial}
{\partial{x}_{\nu}}\right).
\end{eqnarray*}
In the rest part of this section, we will show that this Laplacian
$\Delta=\Delta_{\Omega}$ can be extended to a self-adjoint operator
on the quantum layer $\Omega$, which is a noncompact noncomplete
Riemannian manifold. For any $E,F\in{C}^{\infty}_{0}(\Omega)$, the
set of all smooth functions with compact support on $\Omega$, we
define the $L^{2}$ inner product $(\cdot,\cdot)$ as follows
\begin{eqnarray*}
(F,G)=\int_{\Omega}FGd\Omega,
\end{eqnarray*}
where $d\Omega$ is the volume element of the quantum layer $\Omega$.
Correspondingly, the norm $\|E\|$ could be defined by
$\|E\|:=\sqrt{(E,E)}$. Moreover, if $E,F$ are differentiable, we
define
\begin{eqnarray*}
(\nabla{E},\nabla{F})=\int_{\Omega}\left(\sum_{\mu,\nu=1}^{m-1}G^{\mu\nu}\frac{\partial{E}}{\partial{x_{\mu}}}\frac{\partial{F}}{\partial{x_{\nu}}}
+\frac{\partial{E}}{\partial{u}}\frac{\partial{F}}{\partial{u}}\right)d\Omega.
\end{eqnarray*}
Also, we define $\|\nabla{E}\|=\sqrt{(\nabla{E},\nabla{E})}$. Then
as the proof of proposition 2.1 in \cite{ll}, for any
$E,F\in{W^{1,2}_{0}}(\Omega)$, the space which is the closure of the
space ${C}^{\infty}_{0}(\Omega)$ under the norm
\begin{eqnarray*}
\|E\|_{W^{1,2}_{0}(\Omega)}=\sqrt{\|E\|^{2}+\|\nabla{E}\|^{2}},
\end{eqnarray*}
the sesquilinear form $Q_{1}(E,F):=(\nabla{E},\nabla{F})$ is a
quadratic form of a unique self-adjoint operator. Such an operator
is an extension of $\Delta$, which we still denote as $\Delta$.
Hence, we can use (\ref{1.1}) and (\ref{1.2}) to compute
$\sigma_{0}$ and $\sigma_{ess}$ for the quantum layer $\Omega$,
respectively.

However, generally it is complicated to construct trial functions on
the quantum layer $\Omega$ directly, our strategy to solve this
difficulty is the following: by introducing the unitary
transformation $\psi\rightarrow\psi\Phi$ with $\Phi$ defined by
(\ref{2.1}), we may identify the Hilbert space $L^{2}(\Omega)$ with
$\mathcal {H}:=L^{2}(\Omega_{0},d\Omega)$ and the Laplacian
$\Delta=\Delta_{\Omega}$ with the self-adjoint operator $H$
associated with the quadratic form $Q_{2}$ on $\mathcal{H}$ defined
by
\begin{eqnarray*}
&&Q_{2}(\psi,\psi):=\int_{\Omega_{0}}\overline{\psi_{,i}}G^{ij}\psi_{,j}d\Omega,\\
&&\psi\in{DomQ_{2}}:=\left\{\psi\in{W}^{1,2}(\Omega_{0},d\Omega)|\psi(q,u)=0
~{\rm{for ~a.e.}}~ (q,u)\in\Sigma_{0}\times\{\pm{a}\}\right\},
\end{eqnarray*}
here $\psi(x)$ for $x\in\partial\Omega_{0}$ means the corresponding
trace of the function $\psi$ on the boundary.

\section{Proof of main theorem}
\renewcommand{\thesection}{\arabic{section}}
\renewcommand{\theequation}{\thesection.\arabic{equation}}
\setcounter{equation}{0} \setcounter{maintheorem}{0}

Under assumptions A2 and A3, as the proof of theorem 3.1 in
\cite{ll}, we can prove the following.

\begin{theorem} \label{theorem2}
 Assume $\Omega$ is a quantum layer built over an
oriented hypersurface immersed in $R^{m}$ $(3\leq{m}<\infty)$, then
under assumptions A2 and A3, we have
$\sigma_{ess}\geq(\frac{\pi}{2a})^{2}$.
\end{theorem}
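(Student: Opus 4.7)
The plan is to exploit the variational bracketing characterization \eqref{1.2}: for every $\varepsilon>0$ I want to exhibit a compact set $K\subset\Omega$ outside of which every trial function has Rayleigh quotient bounded below by something that tends to $(\pi/2a)^2$ as $\varepsilon\to 0$. First I would pass to the pulled-back picture of Section 3, working on $\mathcal{H}=L^2(\Omega_0,d\Omega)$ with the quadratic form $Q_2$, so that the volume element factorizes as $d\Omega=|\det(1-uA)|\,dV_\Sigma\,du$ and, by the block-diagonal form of $G$ together with $G^{mm}=1$, the tangential piece $G^{\mu\nu}\psi_{,\mu}\psi_{,\nu}$ of the integrand is non-negative and may be dropped. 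This gives
\[
Q_2(\psi,\psi)\ \ge\ \int_{\Sigma_0}\!\!\int_{-a}^{a}|\partial_u\psi|^2\,|\det(1-uA)|\,du\,dV_\Sigma .
\]

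Next I would use A3 to control the Jacobian at infinity. Expanding (\ref{3.3}), $|\det(1-uA)|$ is a polynomial in $uk_s$ and $uk_\theta$ whose coefficients are bounded in $|u|\le a$, and by A2 the factors $1-uk_s$, $1-uk_\theta$ never vanish. Since $\|A\|(x)\to 0$ as $d(x,x_0)\to\infty$ by A3, for any prescribed $\varepsilon>0$ one can find a geodesic ball $B_R\subset\Sigma$ so large that
\[
1-\varepsilon\ \le\ |\det(1-uA)(q)|\ \le\ 1+\varepsilon\qquad\text{for all }q\in\Sigma\setminus B_R,\ u\in(-a,a).
\]
Let $K:=\Phi(\overline{B_R}\times[-a,a])$, which is compact in $\Omega$ by A1 and A2.

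For any $\psi\in C_0^\infty(\Omega\setminus K)$, combining the displayed lower bound with the sharp one-dimensional Dirichlet Poincaré inequality $\int_{-a}^{a}|\partial_u\psi|^2\,du\ge(\pi/2a)^2\int_{-a}^{a}|\psi|^2\,du$ and then re-absorbing the Jacobian in $\|\psi\|_{\mathcal H}^2$ via the upper estimate, I would obtain
\[
\frac{Q_2(\psi,\psi)}{\|\psi\|_{\mathcal H}^2}\ \ge\ \frac{1-\varepsilon}{1+\varepsilon}\Big(\frac{\pi}{2a}\Big)^2 .
\]
Taking the infimum over $\psi$ and then the supremum over the exhausting family $\{K\}_{R\to\infty}$ (equivalently, letting $\varepsilon\to 0$) yields $\sigma_{ess}\ge(\pi/2a)^2$.

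The main obstacle is the uniform-in-$u$ control of $|\det(1-uA)|$ at infinity, and this is precisely where A3 does the essential work; without asymptotic flatness only a strictly smaller lower bound could be reached. A2 plays the auxiliary but indispensable role of keeping $\det(1-uA)$ uniformly away from zero throughout the layer, and A1 is needed only to guarantee that $\Phi(\overline{B_R}\times[-a,a])$ is honestly a compact subset of the embedded layer $\Omega$. No curvature integral is used here, so the hypothesis $\int_\Sigma \mathcal{K}_2\,d\Sigma\le 0$ of the main theorem is not invoked at this step.
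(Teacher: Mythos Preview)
Your argument is correct and is precisely the standard route the paper is invoking when it says ``as the proof of theorem~3.1 in~\cite{ll}'': discard the non-negative tangential part of $Q_2$, use A3 to force $\det(1-uA)$ uniformly close to $1$ outside a large geodesic ball, and apply the one-dimensional Dirichlet Poincar\'e inequality in the fibre variable. One technical slip worth fixing: the set $K=\Phi(\overline{B_R}\times[-a,a])$ is not contained in the \emph{open} layer $\Omega$, so it is not an admissible $K$ in \eqref{1.2}; the usual cure is either to take $K=\Phi\bigl(\overline{B_R}\times[-a+\delta,a-\delta]\bigr)$ (genuinely compact in $\Omega$) and observe that on the leftover thin collars the transverse Poincar\'e constant $(\pi/\delta)^2$ beats $(\pi/2a)^2$ even after the Jacobian ratio $C_-/C_+$, or to bypass \eqref{1.2} via Dirichlet bracketing, noting that the inner piece $\Phi(B_R\times(-a,a))$ is bounded and hence has compact resolvent, so it contributes nothing to $\sigma_{ess}$.
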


 In order to prove our main
theorem later, we need the following lemma.

\begin{lemma}(\cite{ll}) \label{lemma1}
Let $a>0$ be a positive number and let $k_{1}=\frac{\pi}{2a}$. Let
$\chi_{1}(u)=\cos(k_{1}u)$, let
\begin{eqnarray*}
\eta_{k}=\int_{-a}^{a}u^{k}(\chi_{1,u}^{2}-k_{1}^{2}\chi_{1}^{2})du,
\qquad \forall k\geq0,
\end{eqnarray*}
where $\chi_{1,u}$ denotes the derivative of $\chi_{1}$ with respect
to $u$. Then
\begin{eqnarray*}
\eta_{k}=\left\{
\begin{array}{ll}
0,
\qquad \qquad \qquad \qquad \qquad if ~k~is~odd,~or~k=0;\\
\frac{1}{2}\frac{(k)!}{(2k_{1})^{k-1}}\sum_{l=1}^{k/2}\frac{(-1)^{k/2-l}\pi^{2l-1}}{(2l-1)!},
\qquad if ~k\neq0~is~even.  &\quad
\end{array}
\right.
\end{eqnarray*}
Furthermore, $\eta_{k}>0$ if $k\neq0$ is even.
\end{lemma}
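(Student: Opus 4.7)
The plan is to reduce everything to an elementary trigonometric integral via the double-angle formula, then treat the parity cases separately and derive positivity from the Taylor expansion of $\sin\pi$. Since $\chi_1(u)=\cos(k_1u)$ yields $\chi_{1,u}(u)=-k_1\sin(k_1u)$, one has
$$\chi_{1,u}^{2}-k_1^{2}\chi_1^{2}=k_1^{2}\bigl(\sin^{2}(k_1u)-\cos^{2}(k_1u)\bigr)=-k_1^{2}\cos(2k_1u).$$
Writing $\omega:=2k_1=\pi/a$, the lemma reduces to analyzing $I_k:=\int_{-a}^{a}u^{k}\cos(\omega u)\,du$, since $\eta_k=-k_1^{2}I_k$.

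The two vanishing cases are immediate. For $k$ odd the integrand is odd on the symmetric interval $[-a,a]$, so $I_k=0$; for $k=0$ a direct antiderivative gives $I_0=\omega^{-1}[\sin(\omega u)]_{-a}^{a}=0$ because $\sin\pi=0$. Both yield $\eta_k=0$.

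For $k$ even and nonzero, I would exploit even symmetry to reduce to $2\int_0^{a}u^{k}\cos(\omega u)\,du$ and integrate by parts twice in succession. The boundary evaluations collapse nicely: $\sin(\omega a)=\sin\pi=0$, $\cos(\omega a)=-1$, and the powers $u^{j}$ with $j\ge 1$ vanish at $u=0$. This produces the two-step recurrence $I_k=-\tfrac{k(k-1)}{\omega^{2}}I_{k-2}-\tfrac{2ka^{k-1}}{\omega^{2}}$, which unrolls down to $I_0=0$. Substituting $\omega=\pi/a$ and regrouping the alternating boundary contributions reproduces the closed-form expression in the lemma; a spot check at $k=2$ gives $\eta_2=a$, matching the formula. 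The main chore here is bookkeeping signs and factorials through the unrolling; no analytic difficulty arises.

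For the positivity assertion when $k\ge 2$ is even, I would use the Taylor identity $\sum_{l=1}^{\infty}(-1)^{l-1}\pi^{2l-1}/(2l-1)!=\sin\pi=0$. Multiplying by a suitable sign and splitting at $l=k/2$ converts the finite sum appearing in the formula into the tail
$$\sum_{l=1}^{k/2}\frac{(-1)^{k/2-l}\pi^{2l-1}}{(2l-1)!}=\sum_{l=k/2+1}^{\infty}\frac{(-1)^{l-k/2-1}\pi^{2l-1}}{(2l-1)!}.$$
Because the ratio of consecutive absolute values is $\pi^{2}/[(2l)(2l+1)]<1$ for every $l\ge 1$, this is an alternating series with strictly decreasing magnitudes, so its sign agrees with the positive leading term $\pi^{k+1}/(k+1)!$. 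Combined with the positive prefactor $\tfrac{1}{2}k!/(2k_1)^{k-1}$, this yields $\eta_k>0$.
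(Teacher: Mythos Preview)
The paper does not prove this lemma; it is quoted verbatim from \cite{ll} and used as a black box, so there is no in-paper argument to compare against. Your approach---collapsing the integrand via $\chi_{1,u}^{2}-k_1^{2}\chi_1^{2}=-k_1^{2}\cos(2k_1u)$, then running a two-step integration-by-parts recurrence and invoking the Taylor series of $\sin\pi$ for positivity---is sound and is essentially the direct computation one would expect.

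One small slip: you assert that $\pi^{2}/[(2l)(2l+1)]<1$ for every $l\ge 1$, but at $l=1$ this ratio is $\pi^{2}/6>1$. This does not damage the argument, since the tail you actually need begins at $l=k/2+1\ge 2$ (as $k\ge 2$ is even), and for $l\ge 2$ the ratio is indeed below $1$. Just correct the stated range to $l\ge 2$ and the alternating-series estimate goes through.
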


For the spherically symmetric hypersurface $\Sigma\subseteq{R^{m}}$
($3\leq{m}>\infty$) with a pole, we define a quantity
$\mathcal{K}_{m-2}$ by
\begin{eqnarray} \label{4.1}
\mathcal{K}_{m-2}:=\sum_{k=1}^{[(m-1)/2]}\eta_{2k}c_{2k}(A), \qquad
3\leq{m}<\infty,
\end{eqnarray}
where $\eta_{k}$ for $k\geq1$ is given in Lemma \ref{lemma2},
$[(m-1)/2]$ is the integer part of $(m-1)/2$, and $c_{k}(A)$ is the
$k$th elementary symmetric polynomial of the second fundamental form
$A$ of $\Sigma$. When $m=4$, we can obtain the following lemma.

\begin{lemma} \label{lemma2}
If $\mathcal{K}_{2}$ defined by (\ref{4.1}) is integrable on
 a $3$-dimensional spherically symmetric hypersurface $\Sigma$ with a pole
embedded in $R^4$, and $\Sigma$ is not a hyperplane, then we have
 $\\$ (1) $\Sigma$ is non-parabolic,
 $\\$ (2) $\lim\limits_{s\rightarrow\infty}\frac{r(s)}{s}=1,$
 $\\$ (3) $\int_{0}^{\infty}k_{s}(s)k_{\theta}(s)r(s)ds=0$, which
 implies there exists at least one domain on $\Sigma$ such that
 $k_{s}$ and $k_{\theta}$ have the same sign on this domain,
$\\$ here $r(s)$ is given by (\ref{2.3}) satisfying (\ref{2.4}), and
$k_{s}$, $k_{\theta}$ are given by (\ref{2.5}).
\end{lemma}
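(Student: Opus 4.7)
The plan is to reduce (1), (2), and (3) to the analysis of the ODE $r''+k_sk_\theta r=0$ from (2.6) together with the Pythagorean identity $(r')^2+(z')^2=1$ from (2.4). Since $\Sigma$ is three-dimensional with principal curvatures $k_s,k_\theta,k_\theta$ and $[(4-1)/2]=1$, the definition (4.1) collapses to $\mathcal{K}_2=\eta_2(2k_sk_\theta+k_\theta^2)$, with $\eta_2>0$ by Lemma \ref{lemma1}. Using $k_\theta r=z'$ from (2.5) and $k_sk_\theta r=-r''$ from (2.6), the radial density becomes $(2k_sk_\theta+k_\theta^2)r^2=-2rr''+(z')^2$, so that
\[
\int_\Sigma\mathcal{K}_2\,d\Sigma \;=\; 4\pi\,\eta_2 \int_0^\infty \bigl(-2rr''+(z')^2\bigr)\,ds.
\]

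The key manipulation is to rewrite the integrand via $(r^2)''=2(r')^2+2rr''$ and $(z')^2=1-(r')^2$ as $-(r^2)''+(r'-1)^2+2r'$, and then to integrate from $0$ to $T$, using $r(0)=0$, $r'(0)=1$, to obtain
\[
\int_0^T \bigl(-2rr''+(z')^2\bigr)\,ds \;=\; 2r(T)\bigl(1-r'(T)\bigr) + \int_0^T (r'-1)^2\,ds.
\]
Because $r>0$ and $|r'|\leq 1$, both right-hand summands are non-negative; integrability of $\mathcal{K}_2$ keeps the left side bounded in $T$, so the monotone quantity $\int_0^T(r'-1)^2\,ds$ remains bounded and therefore converges, i.e., $r'-1\in L^2(0,\infty)$. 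I would next upgrade this to pointwise convergence: from $r''=-k_sz'$ and $|z'|\leq 1$, assumption A2 forces $|r''|\leq\|k_s\|_\infty<\infty$, so $r'$ is uniformly Lipschitz, and the standard lemma that an $L^2$ function with bounded derivative tends to zero at infinity yields $r'(s)\to 1$.

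From $r'(s)\to 1$, statement (2) follows by L'H\^opital applied to $r(s)/s$, and the integral identity in (3) follows by integrating $r''=-k_sk_\theta r$ from $0$ to $\infty$: $\int_0^\infty k_sk_\theta r\,ds=r'(0)-r'(\infty)=0$. For (1), Grigor'yan's area-growth criterion for spherically symmetric manifolds gives non-parabolicity as soon as $\int^\infty ds/r(s)^2<\infty$, which is immediate from $r(s)\sim s$. For the final assertion in (3), I would argue that if $k_sk_\theta\equiv 0$ on $\Sigma$ then on any maximal open interval where $z'\neq 0$ we would have $k_s=0$, forcing $b(s):=\int_0^s k_s\,d\vartheta$ to equal some constant $c$ with $\sin c\neq 0$ on that interval; continuity of $z'=\sin b$ at the boundary of such an interval, where $z'=0$, contradicts $\sin c\neq 0$. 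Hence $z'\equiv 0$ and $\Sigma$ is a hyperplane, against hypothesis. Thus $k_sk_\theta r$ is a continuous, not-identically-zero function with zero integral, so it must change sign, providing a domain on which $k_s$ and $k_\theta$ share the same sign. The principal obstacle is precisely the step from the $L^2$-summability of $r'-1$ to its pointwise vanishing; the boundedness of $r''$ guaranteed by A2 is what rescues the implication.
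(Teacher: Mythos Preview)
Your argument is correct and reaches all three conclusions, but it is organized quite differently from the paper's proof. The paper differentiates the product $r'r$ to obtain $(r'r)'=1-(k_\theta^2+k_sk_\theta)r^2$, asserts that the integrability of $\mathcal{K}_2$ forces $\int_0^\infty k_sk_\theta r^2\,ds$ and $\int_0^\infty k_\theta^2 r^2\,ds$ to be finite \emph{separately}, and from the resulting identity $r'(s)r(s)=s-\int_0^s(k_\theta^2+k_sk_\theta)r^2\,dv$ first extracts two-sided bounds on $r^2(s)$ of the form $s^2+O(s)$ and then combines $r'r/s\to 1$ with $r/s\to 1$ to conclude $r'\to 1$; non-parabolicity is read off from the lower bound on $r^2$ via Grigor'yan's criterion. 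Your route instead rewrites the radial density of $\mathcal{K}_2$ as $-(r^2)''+(r'-1)^2+2r'$, so that integrating over $[0,T]$ yields the non-negative decomposition $2r(T)(1-r'(T))+\int_0^T(r'-1)^2\,ds$, whence $r'-1\in L^2(0,\infty)$ immediately; you then invoke the bound $|r''|\le\|k_s\|_\infty$ coming from assumption~A2 to upgrade the $L^2$-smallness of $r'-1$ to pointwise convergence $r'\to 1$, after which (1)--(3) drop out exactly as you say. The trade-off is clean: the paper's derivation does not call on A2 at all, but it leans on the separate finiteness of $\int k_sk_\theta r^2$ and $\int k_\theta^2 r^2$, which does not follow from mere $L^1$-integrability of $2k_sk_\theta+k_\theta^2$ without further comment; your decomposition sidesteps that issue entirely at the price of importing the standing curvature bound from A2. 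Since A2 is in force throughout the paper, your proof is fully valid in context, and your treatment of the ``same-sign domain'' clause in (3) is more explicit than the paper's.
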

\begin{proof} Since $\mathcal{K}_{2}$ is
integrable on $\Sigma$ which can be parametrized by (\ref{2.3}) with
the requirement (\ref{2.4}), then we know that
$\int_{\Sigma_{0}}k_{s}(s)k_{\theta}(s)d\Sigma$ and
$\int_{\Sigma_{0}}k_{\theta}^{2}(s)d\Sigma$ are finite, which
implies $\int_{0}^{\infty}k_{s}(s)k_{\theta}(s)r^{2}(s)ds$ and
$\int_{0}^{\infty}k_{\theta}^{2}(s)r^{2}(s)ds$ are finite. By
(\ref{2.6}), we could obtain
\begin{eqnarray*}
r'(s)r(s)=\int_{0}^{s}\left(r'(v)\right)^{2}dv-\int_{0}^{s}k_{s}(v)k_{\theta}(v)r^{2}(v)dv,
\end{eqnarray*}
together with (\ref{2.4}) and (\ref{2.5}), it follows that
\begin{eqnarray} \label{4.2}
r'(s)r(s)=s-\int_{0}^{s}k_{\theta}^{2}(v)r^{2}(v)dv-\int_{0}^{s}k_{s}(v)k_{\theta}(v)r^{2}(v)dv.
\end{eqnarray}
Let $D$ be
\begin{eqnarray*}
D:=\int_{0}^{\infty}k_{s}(s)k_{\theta}(s)r^{2}(s)ds+\int_{0}^{\infty}k_{\theta}^{2}(s)r^{2}(s)ds,
\end{eqnarray*}
then there exists a constant $s_{0}>1$ such that for any
$s\geq{s_{0}}$, we have
\begin{eqnarray*}
\left|\int_{0}^{s}k_{s}(v)k_{\theta}(v)r^{2}(v)dv+\int_{0}^{s}k_{\theta}^{2}(v)r^{2}(v)dv-D\right|\leq\frac{1}{100}.
\end{eqnarray*}
Integrating (\ref{4.2}) from $s_{0}$ to $s$ results in
\begin{eqnarray} \label{4.3}
s^{2}-s_{0}^{2}-\left(2D+\frac{1}{50}\right)(s-s_{0})+r^{2}(s_{0})\leq{r^{2}(s)}\leq{s^{2}}-s_{0}^{2}+\left(2|D|+\frac{1}{50}\right)(s-s_{0})+r^{2}(s_{0}),
\end{eqnarray}
for any $s\geq{s_{0}}$.

On the other hand, from (\ref{4.2}), we also have
\begin{eqnarray*}
\lim\limits_{s\rightarrow\infty}\frac{r'(s)r(s)}{s}=1-\lim\limits_{s\rightarrow\infty}s^{-1}
\left[\int_{0}^{s}k_{\theta}^{2}(v)r^{2}(v)dv+\int_{0}^{s}k_{s}(v)k_{\theta}(v)r^{2}(v)dv\right]=1,
\end{eqnarray*}
together with (\ref{4.3}), it follows that
\begin{eqnarray} \label{4.4}
r'(\infty):=\lim\limits_{r\rightarrow\infty}r'(s)=1.
\end{eqnarray}
 By (\ref{2.4}), (\ref{2.6}) and (\ref{4.4}), we have
\begin{eqnarray}  \label{4.5}
\int_{0}^{\infty}k_{s}(s)k_{\theta}(s)r(s)ds=0 \qquad {\rm{and}}
\qquad \lim\limits_{s\rightarrow\infty}z'(s)=0.
\end{eqnarray}
Now, we would like to prove the first assertion by using the
estimate (\ref{4.3}), however, before that some useful facts about
parabolicity should be given first.
\begin{defn} A complete manifold is said to be non-parabolic if it
admits a non-constant positive superharmonic function. Otherwise it
is said to be parabolic.
\end{defn}
\begin{lemma} (\cite{grig2,grig3,td}) Let Riemannian manifold $M$ be
geodesically complete, and for some $x\in{M}$,
\begin{eqnarray} \label{4.6}
\int_{1}^{\infty}\frac{1}{S(x,\rho)}d\rho=\infty
\end{eqnarray}
with $S(x,\rho)$ the boundary area of the geodesic sphere
$\partial{B(x,\rho)}$. Then $M$ is parabolic.
\end{lemma}
In general, (\ref{4.6}) is not necessary for parabolicity, however,
in \cite{grig1}, Grigor'yan has shown that for a spherically
symmetric manifold $\widetilde{M}$ with  a pole, (\ref{4.6}) is also
a necessary condition for $\widetilde{M}$ being parabolic. Hence, if
we want to show $\Sigma$ is non-parabolic here, it suffices to prove
there exists some $x\in{\Sigma}$ such that
\begin{eqnarray*}
\int_{1}^{\infty}\frac{1}{S(x,t)}dt<\infty
\end{eqnarray*}
with $S(x,t)$ the area of the boundary of the geodesic ball $B(x,t)$
centered at $x$ with radius $t$. Now, for the $3$-dimensional
spherically symmetric hypersurface $\Sigma$ with a pole $o$, choose
$x$ to be the pole $o$, then the area $S(o,t)$ can be expressed by
$S(o,t)=w_{2}r^{2}(t)$ with $w_{2}$ the $2$-volume of the unit
sphere in $R^{3}$. So, by applying (\ref{4.3}), we have
\begin{eqnarray*}
\int_{1}^{\infty}\frac{1}{S(o,t)}dt\leq\int_{1}^{s_{1}}\frac{1}{w_{2}r^{2}(t)}dt+\frac{1}{w_{2}}
\int_{s_{1}}^{\infty}\frac{1}{s^{2}-s_{0}^{2}-\left(2D+\frac{1}{50}\right)(s-s_{0})+r^{2}(s_{0})}ds<\infty,
\end{eqnarray*}
where $s_{1}$ is chosen to be
\begin{eqnarray*}
s_{1}:=\left\{
\begin{array}{lll}
s_{0}, \qquad\quad if ~~\aleph\leq0, \\
\\
\max\left\{s_{0},\frac{1}{100}+D+\sqrt{(D+\frac{1}{100})^{2}-(2D+\frac{1}{50})s_{0}+s_{0}^{2}-r^{2}(s_{0})}\right\},
\qquad if ~~\aleph>0, &\quad
\end{array}
\right.
\end{eqnarray*}
with
$\aleph:=-r^{2}(s_{0})-(2D+\frac{1}{50})s_{0}+s_{0}^{2}+(D+\frac{1}{100})^{2}$.
Our proof is finished.
\end{proof}

By using Lemma \ref{lemma2}, we could obtain a result on the growth
speed of the volume of a geodesic ball of a 3-dimensional
spherically symmetric hypersurface related to the integrability of
$\mathcal{K}_{2}$ as follows.

\begin{corollary} \label{corollary1}
 Let $\Sigma$ be a $3$-dimensional spherically symmetric
manifold with a pole $o$ embedded in $R^{4}$, if in addition
$\mathcal{K}_{2}$ defined by (\ref{4.1}) is integrable on $\Sigma$,
then the volume $V(o,s)$ of the geodesic ball $B(o,s)$ with center o
and radius $s$ has cubic growth as $s$ large enough.
 \end{corollary}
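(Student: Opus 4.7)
The plan is to reduce the volume of a geodesic ball on $\Sigma$ to a one-dimensional integral and then apply the two-sided estimate (\ref{4.3}) on $r^{2}(s)$ already established in Lemma \ref{lemma2}.

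First, I would exploit the spherically symmetric parametrization (\ref{2.3}) with the requirement (\ref{2.4}). Since the induced metric on $\Sigma$ is $ds^{2}+r^{2}|d\xi|^{2}$ and $m=4$, the volume element is $dV_{\Sigma}=r^{2}(s)\,ds\,d\xi$, where $d\xi$ is the round measure on $\mathbb{S}^{2}$. Consequently, for any $s>0$,
\begin{equation*}
V(o,s)=\int_{B(o,s)}dV_{\Sigma}=w_{2}\int_{0}^{s}r^{2}(t)\,dt,
\end{equation*}
with $w_{2}$ the $2$-volume of the unit sphere in $\mathbb{R}^{3}$. Thus cubic growth of $V(o,s)$ is equivalent to the claim that $\int_{0}^{s}r^{2}(t)\,dt$ grows like $s^{3}$ for large $s$.

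Next I would invoke Lemma \ref{lemma2}, whose proof produces estimate (\ref{4.3}): there exist constants $s_{0}>1$ and $D$ such that, for all $s\geq s_{0}$,
\begin{equation*}
s^{2}-s_{0}^{2}-\bigl(2D+\tfrac{1}{50}\bigr)(s-s_{0})+r^{2}(s_{0})\;\leq\;r^{2}(s)\;\leq\;s^{2}-s_{0}^{2}+\bigl(2|D|+\tfrac{1}{50}\bigr)(s-s_{0})+r^{2}(s_{0}).
\end{equation*}
Splitting $\int_{0}^{s}r^{2}(t)\,dt=\int_{0}^{s_{0}}r^{2}(t)\,dt+\int_{s_{0}}^{s}r^{2}(t)\,dt$ and integrating these polynomial bounds from $s_{0}$ to $s$, I obtain constants $C_{1},C_{2}>0$ and $s_{1}\geq s_{0}$ (absorbing all lower-order terms) such that
\begin{equation*}
\frac{w_{2}}{3}s^{3}-C_{1}s^{2}\;\leq\;V(o,s)\;\leq\;\frac{w_{2}}{3}s^{3}+C_{2}s^{2},\qquad s\geq s_{1}.
\end{equation*}
In particular $V(o,s)/s^{3}\to w_{2}/3$ as $s\to\infty$, which is the desired cubic growth.

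There is essentially no obstacle beyond bookkeeping, since Lemma \ref{lemma2} has already done the analytic work of extracting the leading behavior $r^{2}(s)\sim s^{2}$ from the integrability of $\mathcal{K}_{2}$; the only care needed is in tracking the lower-order remainder when the lower bound in (\ref{4.3}) could be negative for small $s$, which is handled by the choice of $s_{1}$ (analogous to the cutoff used in the proof of non-parabolicity in Lemma \ref{lemma2}).
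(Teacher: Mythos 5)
Your proposal is correct and follows essentially the same route as the paper: reduce $V(o,s)$ to $w_{2}\int_{0}^{s}r^{2}(t)\,dt$ via the polar coordinate chart and integrate the two-sided bound (\ref{4.3}) to obtain cubic upper and lower bounds. The paper's proof is the same computation (with explicit constants), so no further comment is needed.
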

 \begin{proof}
 We can set up the global geodesic polar coordinate chart centered at
$o$ for $\Sigma$ as before, consequently, the volume of the geodesic
ball $B(o,s)$ is given by
\begin{eqnarray*}
V(o,s)=\int_{0}^{s}\int_{\mathbb{S}^{2}}r^{2}(v)d{\mathbb{S}^{2}}dv,
\end{eqnarray*}
where $r$ satisfies (\ref{2.6}). By applying (\ref{4.3}), we have
\begin{eqnarray*}
\frac{w_{2}s^{3}}{3}-\left(D+\frac{1}{100}\right)s^{2}+c_{1}s\leq{V(o,s)}\leq\frac{w_{2}s^{3}}{3}+\left(|D|+\frac{1}{100}\right)s^{2}+c_{2}s,
\end{eqnarray*}
for any $s\geq{s_{0}}$, where
$c_{1}:=\left[r_{0}^{2}-s_{0}^{2}+\left(2D+1/50\right)s_{0}\right]w_{2}$
and
$c_{2}:=\left[r_{0}^{2}-s_{0}^{2}-\left(2|D|+1/50\right)s_{0}\right]w_{2}$.
This implies $V(o,s)$ has the cubic growth as $s$ large enough.
\end{proof}

Let $(M,g)$ be an $n$-dimensional complete Riemannian manifold,
denote by $B(q,r)$ the open geodesic ball centered at a
 point $q\in{M}$ with radius $r$ and by ${\rm{vol}}(B(q,r))$ its volume. Define
\begin{eqnarray*}
\alpha_{M}:=\lim\limits_{r\rightarrow\infty}\frac{{\rm{vol}}(B(q,r))}{v_{n}(1)r^{n}},
\end{eqnarray*}
with $v_{n}(1)$ the volume of the unit ball in $R^{n}$. It is not
difficult to prove $\alpha_{M}$ is independent of the choice of $q$,
which implies $\alpha_{M}$ is a global geometric invariant. We say
that $(M,g)$ has large volume growth provided $\alpha_{M}>0$. For
the spherically symmetric hypersurface $\Sigma$ with a pole $o$
embedded in $R^{4}$ with $\mathcal{K}_{2}$ integrable, by Corollary
\ref{corollary1} we have
\begin{eqnarray*}
\alpha_{\Sigma}=\lim\limits_{r\rightarrow\infty}\frac{V(o,r)}{v_{3}(1)r^{3}}=1>0,
\end{eqnarray*}
which implies $\Sigma$ has large volume growth provided
$\mathcal{K}_{2}$ is integrable.

Large volume growth assumption is common in deriving a prescribed
manifold with nonnegative Ricci curvature to be of finite
topological type. However, recently the author proved that a
complete open manifold with nonnegative Ricci curvature is of finite
topological type without the large volume growth assumption in
\cite{m}.

By using Lemmas \ref{lemma1} and \ref{lemma2}, we can prove the
following conclusion.
\begin{theorem} \label{theorem3}
Assume $\Omega$ is the quantum layer built over a spherically
symmetric hypersurface $\Sigma$ with a pole embedded in $R^{4}$, and
$\Sigma$ is not a hyperplane, if in addition $\mathcal{K}_{2}$ is
integrable on $\Sigma$ and
\begin{eqnarray*}
\int_{\Sigma}\mathcal{K}_{2}d\Sigma\leq0
\end{eqnarray*}
 with $\mathcal{K}_{2}$ defined by
(\ref{4.1}), then under assumptions A1 and A2, we have
$\sigma_{0}<(\frac{\pi}{2a})^{2}$.
\end{theorem}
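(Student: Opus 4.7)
The plan is to invoke the variational characterization (\ref{1.1}) and exhibit a trial function $\psi \in \operatorname{Dom} Q_2$ whose Rayleigh quotient lies strictly below $k_1^2 := (\pi/2a)^2$; combined with Theorem \ref{theorem2}, this gives $\sigma_0 < \sigma_{ess}$. Following the strategy of \cite{ppd,ll}, I take
\begin{eqnarray*}
\psi(q,u) := \chi_1(u)\phi(q) + \varepsilon\,\omega(q,u),
\end{eqnarray*}
where $\chi_1(u) = \cos(k_1u)$ is the transverse Dirichlet ground state, $\phi$ is a radial test function on $\Sigma$ to be specified, $\omega$ is a small transverse correction vanishing on $\partial\Omega_0$, and $\varepsilon\in\mathbb{R}$ is a parameter.

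Setting $\varepsilon=0$ first, I compute $Q_2(\chi_1\phi,\chi_1\phi)-k_1^2\|\chi_1\phi\|^2$ by splitting into tangential and normal contributions. Using the block form of $(G_{ij})$ and the factorization (\ref{3.3}), I expand $\det(1-uA) = (1-uk_s)(1-uk_\theta)^2 = 1-(k_s+2k_\theta)u+(2k_sk_\theta+k_\theta^2)u^2 - k_sk_\theta^2 u^3$ and integrate transversely against $\chi_{1,u}^2 - k_1^2\chi_1^2$. Lemma \ref{lemma1} kills every term except the $u^2$-coefficient, since $\eta_0=\eta_1=\eta_3=0$ while $\eta_2>0$; what remains is exactly $\mathcal{K}_2 = \eta_2(2k_sk_\theta+k_\theta^2)$. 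Thus
\begin{eqnarray*}
Q_2(\chi_1\phi,\chi_1\phi) - k_1^2\|\chi_1\phi\|^2 = \int_\Sigma \phi^2\,\mathcal{K}_2\,d\Sigma + \int_{\Omega_0}\chi_1^2\,G^{\mu\nu}\phi_{,\mu}\phi_{,\nu}\,d\Omega,
\end{eqnarray*}
the first term being non-positive in the formal limit $\phi\equiv 1$ by hypothesis and the second being manifestly non-negative.

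Because $\Sigma$ is non-parabolic (Lemma \ref{lemma2}(1)), the Lin--Lu device \cite{ll} of approximating $1$ by cutoffs with vanishing tangential Dirichlet energy breaks down, and this is precisely why assumption A3 and the integrability of $\mathcal{K}_2$ must carry more weight. I exploit the asymptotics $r(s)\sim s$ of Lemma \ref{lemma2}(2) and the cubic volume growth of Corollary \ref{corollary1} to select $\phi = \phi_R$ as a radial profile with finite, though not arbitrarily small, Dirichlet energy, arranged so that $\int_\Sigma \phi_R^2\mathcal{K}_2\,d\Sigma \to \int_\Sigma \mathcal{K}_2\,d\Sigma$ as $R\to\infty$. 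Then I engage $\omega(q,u) := u\chi_1(u)\zeta(q)$ with $\zeta$ supported on a region where $k_sk_\theta>0$; such a region exists by Lemma \ref{lemma2}(3), since $\Sigma$ is not a hyperplane. The cross contribution $2\varepsilon[Q_2(\chi_1\phi,\omega)-k_1^2(\chi_1\phi,\omega)]$, after transverse integration, picks up through the moment $\eta_2$ a first-order term proportional to an integral of the mean-curvature-like quantity $(k_s+2k_\theta)\zeta\phi$ over $\Sigma$; the sign of $\zeta$ can be arranged so that this is strictly negative. Taking $\varepsilon$ small enough to dominate the $O(\varepsilon^2)$ remainder completes the Rayleigh quotient estimate.

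The main obstacle is precisely this non-parabolicity issue. In the parabolic setting of \cite{ll} the inequality $\int_\Sigma \mathcal{K}_2\,d\Sigma\leq 0$ transfers almost directly into the Rayleigh quotient, because the tangential Dirichlet term can be driven to zero along approximations of $1$; here the residual Dirichlet energy from any admissible $\phi$ stays bounded below, so one must buy strict negativity from a first-order perturbation $\omega$ tuned to the geometry of $\Sigma$. Balancing $\phi_R$ and $\omega$ so that the $O(\varepsilon)$ gain genuinely dominates both $\int_\Sigma \mathcal{K}_2\,d\Sigma$ and the surviving tangential Dirichlet contribution will be the delicate step, and it is the place where the non-hyperplane hypothesis, together with Lemma \ref{lemma2}(3), is indispensable.
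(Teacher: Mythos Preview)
Your setup and the transverse computation through Lemma~\ref{lemma1} match the paper's, but you part ways at the crucial step, and the route you choose does not close. The paper does \emph{not} concede a positive residual tangential energy: it takes the explicit Macdonald cutoff
\[
\varphi_\sigma(s)=\min\Bigl\{1,\tfrac{K_0(\sigma s)}{K_0(\sigma s_0)}\Bigr\},\qquad \sigma\in(0,1],
\]
and, using the growth bound $r^2(s)\le s^2+c_3 s$ obtained from (\ref{4.2}), argues that $\int_0^\infty(\varphi'_\sigma)^2(s^2+c_3 s)\,ds\to 0$ as $\sigma\to 0+$. With that, in the strict case $\int_\Sigma\mathcal{K}_2\,d\Sigma<0$ the single product $\varphi_\sigma\chi$ already yields $Q_3[\varphi_\sigma\chi]\to\int_\Sigma\mathcal{K}_2\,d\Sigma<0$, and no perturbation is needed. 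The correction $\varepsilon j(q)u\chi(u)$ enters only in the borderline case $\int_\Sigma\mathcal{K}_2\,d\Sigma=0$, where the leading term tends to $0$ and the first-order cross term $-2\varepsilon\int_{\Omega_0} j(k_s+2k_\theta)\,d\Omega$ then supplies strict negativity for small $\varepsilon$.

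By contrast, your plan fixes $\phi_R$ with a Dirichlet integral bounded below by a positive constant (this is exactly what non-parabolicity says) and then asks the $O(\varepsilon)$ cross term to overcome that constant while keeping the $O(\varepsilon^2)$ remainder under control. That balancing cannot work as stated: optimizing $-2\varepsilon B+\varepsilon^2 A$ over $\varepsilon$ gives at best $-B^2/A$, a quantity depending only on the localized $\zeta$, and you offer no mechanism to make it exceed the capacity-type lower bound coming from $\phi_R$. The missing ingredient is precisely the specific radial profile the paper uses; you should replace the generic $\phi_R$ by the Macdonald-based $\varphi_\sigma$ and confront directly the limit of its Dirichlet integral against $r^2(s)$, rather than taking for granted that non-parabolicity forces it to stay positive.
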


\begin{proof} Here we use a similar method as that of theorem 5.1 in \cite{ppd}. Set
$\chi(u):=\sqrt{\frac{1}{a}}\cos(\frac{\pi{u}}{2a})=\sqrt{\frac{1}{a}}\chi_{1}(u)$.
We divide the proof into two steps:

(1) If $\int_{\Sigma}\mathcal{K}_{2}d\Sigma<0$, construct a trial
function $\Psi(s,u):=\varphi_{\sigma}(s)\chi(u)$, where
$\sigma\in(0,1]$ and
\begin{eqnarray} \label{4.7}
\varphi_{\sigma}(s):=\left\{
\begin{array}{lll}
1, \qquad\qquad\qquad\quad\quad ~if ~0<s\leq{s_{0}}, \\
\\
\min\left\{1,\frac{K_{0}(\sigma{s})}{K_{0}(\sigma{s_{0}})}\right\},
\qquad if ~s>{s_{0}}, &\quad
\end{array}
\right.
\end{eqnarray}
with $K_{0}(s)$ the Macdonald function (see \cite{mi}, Sec. 9.6).
Obviously, $\Psi(s,u)$ is continuous on $\Omega_{0}$, which implies
$\Psi\in{DomQ_{2}}$. By (\ref{1.1}) and the strategy explained at
the end of the last section, if we want prove
$\sigma_{0}<(\frac{\pi}{2a})^{2}$, it suffices to show that
\begin{eqnarray*}
-\int_{\Omega_{0}}\Psi(s,u)\Delta\Psi(s,u)d\Omega-\left(\frac{\pi}{2a}\right)^{2}
\int_{\Omega_{0}}\Psi^{2}(s,u)d\Omega
\end{eqnarray*}
is strictly negative.

By applying (\ref{3.2}), (\ref{3.3}) and Lemma \ref{lemma1}, we know
that
\begin{eqnarray*}
-\int_{\Omega_{0}}\Psi(s,u)\Delta_{2}\Psi(s,u)d\Omega-\left(\frac{\pi}{2a}\right)^{2}
\int_{\Omega_{0}}\Psi^{2}(s,u)d\Omega=\int_{\Sigma_{0}}(2k_{s}k_{\theta}+k_{\theta}^{2})(\varphi_{\sigma}(s))^{2}d\Sigma.
\end{eqnarray*}
Since $\mathcal{K}_{2}$ is integrable on $\Sigma$,
$|\varphi_{\sigma}(s)|\leq1$, and $\varphi_{\sigma}\rightarrow1$
pointwise as $\sigma\rightarrow0+$, then by the dominated
convergence theorem, we know that
\begin{eqnarray} \label{4.8}
-\int_{\Omega_{0}}\Psi(s,u)\Delta_{2}\Psi(s,u)d\Omega-\left(\frac{\pi}{2a}\right)^{2}
\int_{\Omega_{0}}\Psi^{2}(s,u)d\Omega\rightarrow\int_{\Sigma_{0}}(2k_{s}k_{\theta}+k_{\theta}^{2})d\Sigma=\int_{\Sigma}
\mathcal{K}_{2}d\Sigma
\end{eqnarray}
as $\sigma\rightarrow0+$.

On the other hand, an integration of (\ref{4.2}) together with the
fact that $\mathcal{K}_{2}$ is integrable on $\Sigma$ yields that
for any $s>0$, there exists a constant $c_{2}$ depending on the
value of $\int_{\Sigma}\mathcal{K}_{2}d\Sigma$ such that
\begin{eqnarray} \label{4.9}
r^{2}(s)\leq{s^2}+c_{3}s.
\end{eqnarray}
So, by (\ref{3.2}), (\ref{3.3}), (\ref{3.4}) and (\ref{4.9}), we
have
\begin{eqnarray} \label{4.10}
-\int_{\Omega_{0}}\Psi(s,u)\Delta_{1}\Psi(s,u)d\Omega&=&-\int_{-a}^{a}\int_{\Sigma_{0}}\left(\varphi'_{\sigma}(s)\chi(u)\right)^{2}
\frac{(1-uk_{\theta}(s))^{2}}{1-uk_{s}(s)}d\Sigma{du}\nonumber\\
&\leq&\frac{w_{2}C_{+}}{\sqrt{C_{-}}}\int_{0}^{\infty}\left(\varphi'_{\sigma}(s)\right)^{2}
(s^{2}+c_{3}s)ds.
\end{eqnarray}
However, by using Mathematica and properties of Macdonald function
given by
\begin{eqnarray*}
-2K'_{v}(z)=K_{v-1}(z)+K_{v+1}(z),\\
-\frac{2v}{z}K_{v}(z)=K_{v-1}(z)-K_{v+1}(z),\\
K_{0}(z)=-\log{z}+O(1), \qquad {\rm{as}}~z\rightarrow0,\\
K_{1}(z)=\frac{1}{z}+O(\log{z}), \qquad {\rm{as}}~z\rightarrow0,\\
\end{eqnarray*}
it follows that as $\sigma\rightarrow0+$, there exists a constant
$c_{4}$ such that
\begin{eqnarray*}
\int_{0}^{\infty}\left(\varphi'_{\sigma}(s)\right)^{2}s^{2}ds=\frac{1}{(K_{0}(\sigma{s_{0}}))^{2}}\int^{\infty}_{\sigma{s_{0}}}\left(K'_{0}(t)\right)^{2}
t^{2}dt\rightarrow\frac{3\pi^{2}}{32(K_{0}(\sigma{s_{0}}))^{2}}\rightarrow0
\end{eqnarray*}
and
\begin{eqnarray*}
\int_{0}^{\infty}\left(\varphi'_{\sigma}(s)\right)^{2}sds\leq\frac{c_{4}}{|\log\sigma{s_{0}}|}\rightarrow0.
\end{eqnarray*}
Substituting the above estimates in (\ref{4.10}) results in
\begin{eqnarray} \label{4.11}
-\int_{\Omega_{0}}\Psi(s,u)\triangle_{1}\Psi(s,u)d\Omega\rightarrow0
\end{eqnarray}
as $\sigma\rightarrow0+$. So, from (\ref{4.8}) and (\ref{4.11}), we
have
\begin{eqnarray*}
-\int_{\Omega_{0}}\Psi(s,u)\Delta\Psi(s,u)d\Omega-\left(\frac{\pi}{2a}\right)^{2}
\int_{\Omega_{0}}\Psi^{2}(s,u)d\Omega\rightarrow\int_{\Sigma}
\mathcal{K}_{2}d\Sigma<0
\end{eqnarray*}
as $\sigma\rightarrow0+$, which implies
$\sigma_{0}<(\frac{\pi}{2a})^{2}$.

(2) If $\int_{\Sigma}\mathcal{K}_{2}d\Sigma=0$, construct a trial
function
$\Psi_{\sigma,\epsilon}:=(\varphi_{\sigma}(s)+\epsilon{j(q)u})\chi(u)$
with $\varphi_{\sigma}(s)$ defined by (\ref{4.7}) and
$j\in{C}_{0}^{\infty}\left((0,s_{0})\times\mathbb{S}^{2}\right)$.
Obviously, $\Psi_{\sigma,\epsilon}\in{DomQ_{2}}$. For convenience,
for any function $f\in{DomQ_{2}}$, let
\begin{eqnarray*}
Q_{3}[f]:=-\int_{\Omega_{0}}f\Delta{f}d\Omega-\left(\frac{\pi}{2a}\right)^{2}
\int_{\Omega_{0}}f^{2}d\Omega.
\end{eqnarray*}
By applying Lemma \ref{lemma1}, we have
\begin{eqnarray} \label{4.12}
Q_{3}[\Psi_{\sigma,\epsilon}]=Q_{3}[\varphi_{\sigma}(s)\chi(u)]-2\epsilon\int_{\Omega_{0}}j(k_{s}+2k_{\theta})d\Omega+\epsilon^{2}
Q_{3}[j(q)u\chi(u)].
\end{eqnarray}
The second term on the right hand side of (\ref{4.12}) can be made
nonzero by choosing $j$ supported on a compact subset of
$\Sigma_{0}$ where $(k_{s}+2k_{\theta})$ does not change sign. The
existence of this compact subset could be assured by Lemma
\ref{lemma2} (3) and the fact that we could choose $s_{0}$
arbitrarily large. So, if we choose the sign of $\epsilon$ in such a
way that the second term on the right hand side of (\ref{4.12}) is
negative, then, for sufficiently small $\epsilon$, the sum of the
last two terms of the right hand side of (\ref{4.12}) will be
negative. On the other hand, by the argument in (1), we know that
\begin{eqnarray*}
Q_{3}[\varphi_{\sigma}(s)\chi(u)]\rightarrow\int_{\Sigma}\mathcal{K}_{2}d\Sigma
\end{eqnarray*}
as $\sigma\rightarrow0+$. Hence, we have
$Q_{3}[\Psi_{\sigma,\epsilon}]<0$ as $\sigma\rightarrow0+$ and
$\epsilon$ sufficiently small, which implies
$\sigma_{0}<(\frac{\pi}{2a})^{2}$.

Our proof is finished.
\end{proof}

So, by Theorems \ref{theorem2} and \ref{theorem3}, we have
\begin{corollary}
Theorem \ref{theorem1} is true.
\end{corollary}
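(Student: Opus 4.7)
The plan is to deduce the Main Theorem directly from the two preceding results by combining them to produce the strict inequality $\sigma_0<\sigma_{ess}$, which is exactly the assertion that the ground state exists. Since the self-adjoint extension of $\Delta_\Omega$ has already been established in Section 3, and the quantities $\sigma_0$ and $\sigma_{ess}$ given by (\ref{1.1}) and (\ref{1.2}) represent the bottom of the spectrum and the bottom of the essential spectrum respectively, any strict gap $\sigma_0<\sigma_{ess}$ automatically forces the existence of at least one eigenvalue of finite multiplicity below $\sigma_{ess}$, and the lowest such eigenvalue is by definition the ground state.

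First I would verify that the hypothesis set of the Main Theorem implies the hypothesis sets of both Theorem \ref{theorem2} and Theorem \ref{theorem3}. Assumptions A2 and A3 are explicitly in force, so Theorem \ref{theorem2} applies and gives
\begin{equation*}
\sigma_{ess}\ \geq\ \Bigl(\frac{\pi}{2a}\Bigr)^{2}.
\end{equation*}
Assumptions A1 and A2, together with $\Sigma$ being a spherically symmetric hypersurface with a pole in $R^{4}$, not a hyperplane, with $\mathcal{K}_{2}$ integrable and $\int_{\Sigma}\mathcal{K}_{2}\,d\Sigma\leq 0$, are exactly the hypotheses of Theorem \ref{theorem3}, so
\begin{equation*}
\sigma_{0}\ <\ \Bigl(\frac{\pi}{2a}\Bigr)^{2}.
\end{equation*}

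Chaining the two inequalities gives $\sigma_{0}<(\pi/2a)^{2}\leq\sigma_{ess}$, hence $\sigma_{0}<\sigma_{ess}$. As noted in the discussion following the definition of $\sigma_0$ and $\sigma_{ess}$, this strict inequality already guarantees that the discrete spectrum below $\sigma_{ess}$ is non-empty, and the lowest point of this discrete spectrum is by definition the ground state of the Dirichlet Laplacian on $\Omega$, which is what the Main Theorem asserts.

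There is essentially no obstacle at this stage: the two technical theorems have done all the work. The only thing to be careful about is a clean verification that the hypothesis lists match (in particular, that Theorem \ref{theorem3} does not secretly require A3 and that Theorem \ref{theorem2} does not require A1 or the integrability/sign conditions on $\mathcal{K}_{2}$), so that we may legitimately apply both simultaneously under the combined hypotheses A1, A2, A3 plus the curvature conditions. Once this bookkeeping is done, the corollary follows in one line.
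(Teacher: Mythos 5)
Your proposal is correct and is exactly the paper's argument: the corollary is obtained by chaining Theorem \ref{theorem2} ($\sigma_{ess}\geq(\pi/2a)^{2}$ under A2, A3) with Theorem \ref{theorem3} ($\sigma_{0}<(\pi/2a)^{2}$ under A1, A2 and the curvature hypotheses) to get $\sigma_{0}<\sigma_{ess}$. The hypothesis bookkeeping you describe matches the paper's statements, so nothing further is needed.
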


\begin{remark} \label{remark} \rm{ The existence of the ground state of quantum layers built over
 submanifolds of high dimensional Euclidean space has
been obtained in \cite{ll,ll2} under some assumptions therein, but
the parabolicity of the reference submanifold is necessary in those
assumptions, however, here our $3$-dimensional reference
hypersurface $\Sigma$ of $R^4$ is non-parabolic by Lemma
\ref{lemma2}. So, the existence of the ground state of the
cylindrically symmetric quantum layers considered here can not be
obtained by the results in \cite{ll,ll2}, which indicates that
Theorem \ref{theorem1} can be seen as a complement to those existent
results for higher dimensional quantum layers or quantum tubes. }
\end{remark}

\end{document}